\DeclareMathOperator{\Cov}{\mathbb{C}ov}
\DeclareMathOperator{\id}{id}
\DeclareMathOperator{\diam}{diam}
\newcommand{\ee}{\mathbb E}
\newcommand{\nn}{\mathbb N}
\newcommand{\pp}{\mathbb P}
\newcommand{\rr}{\mathbb R}
\newcommand{\ttt}{\mathbb T}
\newcommand{\vb}{\vspace{3mm}}
\newcommand{\half}{\frac{1}{2}}
\newcommand{\toi}{\to\infty}
\newcommand{\de}{\stackrel{d}{=}}
\newcommand{\dto}{\stackrel{d}{\to}}
\newcommand{\prob}[1]{\pp\left(#1\right)}
\newcommand{\rv}{\mathscr{RV}}
\newtheorem{theorem}{Theorem}
\newtheorem{lem}{Lemma}
\newtheorem{cor}{Corollary}
\newtheorem{prop}{Proposition}
\theoremstyle{remark}
\newtheorem{rem}{Remark}
\renewenvironment{proof}[1][\proofname]{\par \normalfont \trivlist
 \item[\hskip\labelsep\itshape #1]\ignorespaces
}{%
 \hspace*{\fill}$\Box$ \endtrivlist
}
\renewcommand{\proofname}{\noindent {\bf Proof}}
\begin{document}
\date{\today}
\subjclass[2010]{Primary 60G15, 60F17; Secondary 60K25}
\keywords{Gaussian processes, Heavy traffic, Light traffic, Functional limit theorems}
\title[Gaussian queues in light and heavy traffic]{Gaussian queues \\in  light and heavy traffic}

\author{K.\ D\polhk{e}bicki}
\address{Instytut Matematyczny, University of
Wroc\l aw, pl.\ Grunwaldzki 2/4, 50-384 Wroc\l aw, Poland.}
\email{Krzysztof.Debicki@math.uni.wroc.pl}
\thanks{KD was supported by
MNiSW Grant N N201 394137 (2009-2011) and by a travel grant from
NWO (Mathematics Cluster STAR)}

\author{K.M.\ Kosi\'nski}
\address{Korteweg-de Vries Institute for Mathematics,
University of Amsterdam, the Netherlands; E{\sc urandom},
Eindhoven University of Technology}
\email{K.M.Kosinski@uva.nl}
\thanks{KK was supported by NWO grant 613.000.701.}

\author{M.\ Mandjes}
\address{Korteweg-de Vries Institute for Mathematics,
University of Amsterdam, the Netherlands; E{\sc urandom},
Eindhoven University of Technology, the Netherlands; CWI,
Amsterdam, the Netherlands}
\email{M.R.H.Mandjes@uva.nl}

\bibliographystyle{plainnat}
\setcitestyle{numbers}

\begin{abstract}
In this paper we investigate Gaussian queues in the light-traffic
and in the heavy-traffic regime.
Let $Q^{(c)}_X\equiv\{Q^{(c)}_X(t):t\ge 0\}$ denote a stationary
buffer content process for a fluid queue fed by the centered Gaussian
process $X\equiv\{X(t):t\in\rr\}$ with stationary increments,
$X(0)=0$, continuous sample paths and variance function
$\sigma^2(\cdot)$. The system is drained with
a constant rate $c>0$, so that for any $t\ge 0$,
\[
Q_{X}^{(c)}(t)=\sup_{-\infty<s\le t}\left(X(t)-X(s) -c(t-s)\right).
\]
We study $Q^{(c)}_X\equiv\{Q_X^{(c)}(t):t\ge0\}$ in the
regimes $c\to 0$ (heavy traffic) and $c\to\infty$ (light
traffic). We show for both limiting
regimes that, under mild regularity conditions on $\sigma$, there exists a normalizing function $\delta(c)$ such that
$Q^{(c)}_X(\delta(c)\cdot)/\sigma(\delta(c))$ converges
to $Q^{(1)}_{B_H}(\cdot)$ in $C[0,\infty)$, where $B_H$ is a fractional Brownian motion with suitably chosen Hurst parameter $H$.
\end{abstract}

\maketitle

\section{Introduction}
\label{s.intro} A substantial research effort has been devoted to
the analysis of queues with Gaussian input, often also called {\it
Gaussian queues} \cite{Mandjes07,Mannersalo02,Norros94}. The
interest in this model can be explained from the fact that the
Gaussian input model is highly flexible in terms of incorporating a
broad set of correlation structures and, at the same time, adequately approximates various real-life systems. A key result
in this area is \cite{Taqqu97}, where it is shown that large
aggregates of Internet sources converge to a fractional Brownian
motion (being a specific Gaussian process).

The setting considered in this paper is that of a centered Gaussian
process $X\equiv\{X(t):t\in\mathbb R\}$ with stationary increments, $X(0)=0$, continuous sample paths
and variance function $\sigma^2(\cdot)$, equipped with a
deterministic, linear drift with rate $c>0$, reflected at 0:
\[Q_X^{(c)}(t)=\sup_{-\infty<s\le
t}(X(t)-X(s)-c(t-s)).\] The resulting {\it stationary workload
process} can be regarded as a {\it queue} \cite{Reich58}. The
objective of the paper is to study
$Q^{(c)}_X\equiv\{Q_X^{(c)}(t):t\ge0\}$  in the limiting regimes
$c\to0$ (heavy traffic) and $c\to\infty$ (light traffic).

Under mild conditions on the variance function $\sigma^2(\cdot)$,
$Q_X^{(c)}$ is a properly defined, almost surely (a.s.) finite
stochastic process. However, if $c\to 0$, then $Q_X^{(c)}(t)$
grows to infinity (in a distributional sense), for any $t\ge0$. The branch of
queueing theory investigating {\it how fast} $Q_{X}^{(c)}$ grows to
infinity (as $c\to 0$) is commonly referred to as the domain
of {\it heavy-traffic approximations}. In many situations this
regime allows manageable expressions for performance metrics that
are, under `normal' load conditions, highly complex or even
intractable, see for instance the seminal paper by \citet{Kingman61}
on the classical single-server queue. Since then, a similar approach
has been followed in various other settings, see, e.g.,
\cite{Boxma99, Prokhorov63, Resnick00, Szczotka04, Whitt71} and many
other papers.

Analogously, one can ask what happens in the {\it light-traffic}
regime, i.e.,  $c\to \infty$; then evidently $Q_X^{(c)}$ decreases
to zero. So far, hardly any attention has been paid to the
light-traffic and heavy-traffic regimes for Gaussian queues. An
exception is \citet{Debicki04a}, where the focus is on a special
family of Gaussian processes, in a specific heavy-traffic setting.
The primary contribution of the present paper concerns the analysis
of $Q_X^{(c)}$ under both limiting regimes, for quite a broad class
of Gaussian input processes $X$.

\vb

We now give a somewhat more detailed introduction to the material
presented in this paper. 
It is  well known that under the assumption that
$\sigma(\cdot)$ varies regularly at infinity with parameter
$\alpha\in(0,1)$, for any function $\delta$ such that
$\delta(c)\toi$ as $c\to0$, there is convergence to fractional
Brownian motion in the heavy-traffic regime:
\begin{equation}
\label{eq:introHT}
\frac{X(\delta(c)\cdot)}{\sigma(\delta(c))}\dto B_\alpha(\cdot),$ as $c\to0.
\end{equation}
We shall show that an analogous statement holds in the light-traffic
regime, that is, if $\sigma(\cdot)$ varies regularly at zero with
parameter $\lambda\in(0,1)$ (i.e., $x\mapsto\sigma(1/x)$ varies regularly at
infinity with parameter $-\lambda$), then for any function $\delta$ such
that $\delta(c)\to0$ as $c\toi$,
\begin{equation}
\label{eq:introLT}
\frac{X(\delta(c)\cdot)}{\sigma(\delta(c))}\dto B_\lambda(\cdot),$ as $c\toi.
\end{equation}
Assuming that $X$ satisfies some minor additional conditions, both \eqref{eq:introHT} and \eqref{eq:introLT} apply in $C(\rr)$, the space of all continuous functions
on $\rr$.

Our paper shows that the statements \eqref{eq:introHT} and
\eqref{eq:introLT}, which relate to the input processes, carry over
to the corresponding stationary buffer content processes
$Q^{(c)}_X$. That is, we identify, under specific conditions, a function
$\delta(\cdot)$ such that
\[
\frac{Q_X^{(c)}\left(\delta(c)\cdot\right)}{\sigma(\delta(c))}\dto
Q_{B_\alpha}^{(1)}(\cdot),$ as $c\to0
\]
and
\[
\frac{Q_X^{(c)}\left(\delta(c)\cdot\right)}{\sigma(\delta(c))}\dto
Q_{B_\lambda}^{(1)}(\cdot),$ as $c\toi,
\]
both in the space $C[0,\infty)$ of all continuous functions
on $[0,\infty)$.

This paper is organized as follows. In \autoref{s.prel} we introduce
the notation and give some preliminaries. \autoref{sec:HT} presents
the results for the heavy-traffic regime, whereas \autoref{sec:LT}
covers the light-traffic regime. We give the proofs of the main
theorems, (i.e., \autoref{th.heavy} and \autoref{thm:LT}) in
\autoref{sec:proof}.

\section{Preliminaries}
\label{s.prel} In this paper we use the following notation. By
$\id:\rr\to\rr$ we shall denote the identity operator on $\rr$, that
is, $\id(t)=t$ for every $t\in\rr$. We write $f(x)\sim g(x)$ as $x\to
x_0\in[0,\infty]$ when $\lim_{x\to x_0}f(x)/g(x)=1$. Let
$\rv_\infty(\alpha)$ and $\rv_0(\lambda)$ denote the class of
regularly varying functions at infinity with parameter $\alpha$ and
at zero with parameter $\lambda$, respectively. That is, for a non-negative measurable functions $f,g$ on $[0,\infty)$, $f\in\rv_\infty(\alpha)$ if for all $t>0$,
$f(t x)/f(x)\to t^{\alpha}$ as $x\toi$;
$g\in\rv_0(\lambda)$ if for all $t>0$,
$g(t x)/g(x)\to t^\lambda$ as $x\to0$.

\subsection{Spaces of continuous functions}
We refer to \citet{Billingsley99} for the details of this subsection.
For any $T>0$, let 
$C[-T,T]$ be the space of all continuous
functions $f:[-T,T]\to\rr$. Equip
$C[-T,T]$ with the topology of uniform convergence, i.e., the
topology generated by the norm $\|f\|_{[-T,T]}:=\sup_{t\in
[-T,T]}|f(t)|$ under which $C[-T,T]$ is a separable Banach space. 
Therefore, by
Prokhorov's theorem, weak convergence
of random elements $\{X^{(c)}\}$ of $C[-T,T]$ as $c\toi$ is implied by convergence of finite-dimensional distributions and tightness.
A family $\{X^{(c)}\}$ in $C[-T,T]$ is tight if and only if for each positive $\varepsilon$, there exists
an $a$ and $c_0$ such that
\begin{equation}
\label{eq:tight1}
\prob{|X^{(c)}(0)|\ge a}\le\varepsilon,$ for all $c\ge c_0;
\end{equation}
and, for any $\eta>0$,
\begin{equation}
\label{eq:modulus}
\lim_{\zeta\to0}\limsup_{c\to\infty}\prob{\sup_{\substack{|t-s|\le\zeta\\s,t\in[-T,T]}}
\left|X^{(c)}(t)-X^{(c)}(s)\right|\ge\eta}=0.
\end{equation}
For notational convenience, we leave out the requirement
$s,t\in[-T,T]$ explicitly in the remainder of this paper.

Finally, let $C(\rr)$ be the space of all functions
$f:\rr\to\rr$ such that $f_{|[-T,T]}\in C[-T,T]$ for all $T>0$. 
The above definitions
extend in an obvious way to $C[0,T]$, $C[0,\infty)$ and convergence as $c\to0$.

For $\gamma\ge0$, let $\Omega^\gamma$ be the space of all continuous
functions $f:\rr\to\rr$ such that $\lim_{t\to\pm\infty}
f(t)/(1+|t|^\gamma)=0$. Equip $\Omega^\gamma$ with the topology
generated by the norm
$\|f\|_{\Omega^\gamma}:=\sup_{t\in\rr}|f(t)|/(1+|t|^\gamma)$ under
which $\Omega^\gamma$ is a separable Banach space, so that
Prokhorov's theorem applies. 
The following property can be found in \cite[Lemma 3]{Buldygin99} or \cite[Lemma 4]{Dieker05}.
\begin{prop}
\label{prop:convO}
Let a family of random elements $\{X^{(c)}\}$ on $\Omega^\gamma$ be given. Suppose that
the image of $\{X^{(c)}\}$ under the projection mapping $p_T:\Omega^\gamma\to C[-T,T]$ is tight
in $C[-T,T]$ for all $T>0$. Then $\{X^{(c)}\}$ is tight in $\Omega^\gamma$ if and only if for any
$\eta>0$,
\begin{equation}
\label{eq:prop:convO}
\lim_{T\toi}\limsup_{c\toi}\prob{\sup_{|t|\ge T}\frac{|X^{(c)}(t)|}{1+|t|^\gamma}\ge\eta}=0.
\end{equation}
\end{prop}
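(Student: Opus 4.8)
The plan is to invoke Prokhorov's theorem, which applies because $\Omega^\gamma$ is a separable Banach space and hence a Polish space: the family $\{X^{(c)}\}$ is tight in $\Omega^\gamma$ if and only if for every $\varepsilon>0$ there is a relatively compact set $K\subseteq\Omega^\gamma$ with $\prob{X^{(c)}\in K}\ge 1-\varepsilon$ for all $c$. The heart of the argument is therefore an Arzel\`a--Ascoli-type description of the relatively compact subsets of $\Omega^\gamma$, which I would isolate as an auxiliary lemma: a set $A\subseteq\Omega^\gamma$ is relatively compact if and only if (i) for every $T>0$ the projection $p_T(A)$ is relatively compact in $C[-T,T]$, and (ii) $\lim_{T\toi}\sup_{f\in A}\sup_{|t|\ge T}|f(t)|/(1+|t|^\gamma)=0$. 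Condition (ii) is exactly the deterministic counterpart of \eqref{eq:prop:convO}, so once this characterization is in place both implications of the proposition reduce to routine measure-theoretic bookkeeping.

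To prove the characterization, for necessity I would note that (i) is immediate from the continuity of $p_T$, while (ii) follows by contradiction: a failure produces $f_n\in A$ and $|t_n|\ge n$ with $|f_n(t_n)|/(1+|t_n|^\gamma)\ge\eta$, and extracting an $\Omega^\gamma$-convergent subsequence $f_n\to f$ and using that $f\in\Omega^\gamma$ decays ($|f(t)|/(1+|t|^\gamma)\to0$) together with $\|f_n-f\|_{\Omega^\gamma}\to0$ contradicts the lower bound $\eta$. For sufficiency I would take any sequence in $A$, use (i) and a diagonal extraction to obtain a subsequence converging uniformly on every $[-T,T]$ to a continuous limit $f$, and then use (ii)---which passes to the pointwise limit and forces $f\in\Omega^\gamma$---to upgrade local uniform convergence to convergence in $\|\cdot\|_{\Omega^\gamma}$, by splitting the weighted supremum at a large level $T$.

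Granting the characterization, the forward implication is short: if $\{X^{(c)}\}$ is tight, fix $\varepsilon>0$ and a compact $K$ with $\prob{X^{(c)}\in K}\ge1-\varepsilon$; by (ii) applied to $K$ there is $T$ with $\sup_{f\in K}\sup_{|t|\ge T}|f(t)|/(1+|t|^\gamma)<\eta$, so the event in \eqref{eq:prop:convO} forces $X^{(c)}\notin K$ and hence has probability at most $\varepsilon$, which yields \eqref{eq:prop:convO}. For the converse I would, given $\varepsilon>0$, use \eqref{eq:prop:convO} with $\eta=1/k$ to pick levels $T_k\uparrow\infty$ controlling the tail $\sup_{|t|\ge T_k}|X^{(c)}(t)|/(1+|t|^\gamma)$ below $1/k$ up to probability $\varepsilon 2^{-k-1}$, and use the local-tightness hypothesis to pick compacts $K_{T_k}\subseteq C[-T_k,T_k]$ controlling the projections $p_{T_k}X^{(c)}$ up to probability $\varepsilon 2^{-k-1}$. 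The set
\[
K=\Bigl\{f\in\Omega^\gamma:\ p_{T_k}f\in K_{T_k}\ \text{and}\ \sup_{|t|\ge T_k}\frac{|f(t)|}{1+|t|^\gamma}\le\frac1k\ \text{for all }k\Bigr\}
\]
is relatively compact by (i)--(ii), and a union bound gives $\prob{X^{(c)}\notin K}\le\varepsilon$, completing the argument.

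I expect the main obstacle to be the bookkeeping of uniformity in $c$ in the converse: \eqref{eq:prop:convO} controls the tails only through a $\limsup_{c\toi}$, so each level $T_k$ and each local compact $K_{T_k}$ dominate the relevant probabilities only for $c$ beyond some threshold. This is reconciled by reading tightness in the asymptotic sense appropriate for the $c\toi$ limit (equivalently, by working along an arbitrary sequence $c_n\toi$), so that only large $c$ matter, the remaining bounded range of indices contributing automatically tight laws that are absorbed into $K$ after enlarging it on each $[-T_k,T_k]$. A secondary technical point is the measurability of the suprema in \eqref{eq:prop:convO} and of the event $\{X^{(c)}\in K\}$, which follows from sample-path continuity once the suprema are restricted to rationals.
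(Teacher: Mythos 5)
Your argument is correct, but note that the paper does not prove this proposition at all: it is quoted from the literature (\cite[Lemma 3]{Buldygin99}, \cite[Lemma 4]{Dieker05}), so there is no in-paper proof to compare against. What you propose is essentially the standard argument underlying those references: identify $\Omega^\gamma$ (via $f\mapsto f/(1+|\cdot|^\gamma)$) with a separable Banach space of functions vanishing at infinity, prove an Arzel\`a--Ascoli-type characterization of its relatively compact subsets --- local relative compactness of the projections plus uniform decay of the weighted tails --- and then translate both directions of the equivalence through Prokhorov's theorem. All the individual steps check out: the contradiction argument for necessity of the decay condition, the diagonal extraction plus tail-splitting for sufficiency, the construction of the compact set $K$ from the sets $K_{T_k}$ and the tail events with budgets $\varepsilon 2^{-k-1}$, and the closedness/compactness of $K$ (the tail functionals are $1$-Lipschitz in $\|\cdot\|_{\Omega^\gamma}$, so the defining conditions are closed). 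You also correctly flag the one genuinely delicate point, namely that \eqref{eq:prop:convO} and the hypothesis control things only through $\limsup_{c\toi}$, so ``tight'' must be read asymptotically (as the paper itself does in \eqref{eq:tight1}); your resolution --- work along an arbitrary sequence $c_n\toi$ and absorb the finitely many exceptional indices for each $k$ by enlarging $K_{T_k}$ and, if needed, the level $T_k$, using that each individual law on the Polish space $\Omega^\gamma$ is tight --- is the right one and is exactly the sense in which the proposition is used later in the paper.
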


\subsection{Fluid Queues}
Let $Q^{(c)}_X\equiv\{Q^{(c)}_X(t):t\ge 0\}$ denote a stationary
buffer content process for a fluid queue fed by a centered Gaussian
process $X\equiv\{X(t):t\in\rr\}$ with stationary increments,
$X(0)=0$, continuous sample paths and variance function
$\sigma^2(\cdot)$. The system is drained with
a constant rate $c>0$, so that for any $t\ge 0$,
\[
Q_{X}^{(c)}(t)=\sup_{-\infty<s\le t}\left(X(t)-X(s) -c(t-s)\right).
\]

Additionally, an equivalent representation for $Q_{X}^{(c)}(t)$
holds \cite[p. 375]{Rob03}:
\begin{equation}
\label{eq:defQ}
Q^{(c)}_X(t)=Q^{(c)}_X(0)+X(t)-ct+\max\left(0,\sup_{0<s<t}\left(-Q_X^{(c)}(0)-(X(s)-cs)\right)\right).
\end{equation}

Throughout the paper we say that $X$ satisfies:
\begin{itemize}
\item[{\bf C:}] if $\sigma^2(t)|\log|t||^{1+\varepsilon}$
has a finite limit as $t\to0$, for some $\varepsilon>0$;
\item[{\bf RV}$_0${\bf :}] if $\sigma\in\rv_0(\lambda)$, for $\lambda\in(0,1)$;
\item[{\bf RV}$_\infty${\bf :}] if $\sigma\in\rv_\infty(\alpha)$, for $\alpha\in(0,1)$;
\item[{\bf HT:}] if both {\bf C} and {\bf RV}$_\infty$ are satisfied.
\item[{\bf LT:}] if both {\bf RV}$_0$ and {\bf RV}$_\infty$ are satisfied.
\end{itemize}

\begin{rem} In our setting ($X$ has stationary increments), the assumption that $X$ is continuous is equivalent to the convergence of {\it Dudley integral}, see \autoref{subsect:ME}. This is immediately implied by condition {\bf C}, see \cite[Thm.\ 1.4]{Adler90}. 
However, the real importance of condition {\bf C} lies in the fact that if in addition $X$ satisfies {\bf RV}$_\infty$, then $X$ also belongs to $\Omega^\gamma$, for every $\gamma>\alpha$. This is pointed out in 
\autoref{sec:HT}.
Finally, note that {\bf C} is met under {\bf RV}$_0$. Indeed, since $\sigma\in\rv_0(\lambda)$, then
$t\mapsto\sigma(1/t)$ belongs to $\rv_\infty(-\lambda)$, thus $\sigma^2(1/t)t^{\lambda}\to0$ as $t\toi$.
Equivalently, $\sigma^2(t)t^{-\lambda}\to 0$ as $t\to0$, implying $\lim_{t\to
0}\sigma^2(t)|\log|t||^{1+\varepsilon}=0$, for any fixed $\varepsilon>0$.
Furthermore, 
{\bf RV}$_\infty$ implies that $X(t)/t\to 0$ a.s., for $t\to\pm\infty$,
so that $Q_{X}^{(c)}$ is a properly defined stochastic process for any $c>0$, see \cite[Lemma 3]{Dieker05}. Lastly, the assumption that $X$ has continuous sample paths implies that $\sigma$ is continuous.
\end{rem}
Due to the stationarity of increments, all finite-dimensional distributions of $X$ are  specified by the variance function, since we have
\begin{equation}
\label{eq:cov}
\Cov(X(t),X(s))=\half\left(\sigma^2(s)+\sigma^2(t)-\sigma^2(|t-s|)\right).
\end{equation}
Recall that by $B_H\equiv\{B_H(t):t\in\rr\}$ we  denote fractional
Brownian motion with Hurst parameter $H\in(0,1)$, that is, a
centered Gaussian process with stationary increments, continuous
sample paths, $B_H(0)=0$ and covariance function
\begin{equation}
\label{eq:covB}
\Cov(B_H(t),B_H(s))=\half\left(|s|^{2H}+|t|^{2H}-|t-s|^{2H}\right).
\end{equation}
As mentioned in the introduction, if $c\to0$, then, for any $t$, 
$Q_{X}^{(c)}(t)\to\infty$ a.s., which is called the {\it heavy-traffic
regime}. On the other hand, if $c\toi$, then $Q_{X}^{(c)}(t)\to 0$
a.s., which is called the {\it light-traffic regime}.

\subsection{Metric entropy}
\label{subsect:ME} For any $\ttt\subset\mathbb R$ define the {\it
semimetric}
\[
d(t,s):=\sqrt{\ee|X(t)-X(s)|^2}=\sigma(|t-s|),$\:\:\: $t,s\in\ttt.
\]
We say that $S\subset\ttt$ is a $\vartheta$-net in $\ttt$ with
respect to the semimetric $d$, if for any $t\in\ttt$ there exists an
$s\in S$ such that $d(t,s)\le\vartheta$. The metric entropy $\mathbb
H_d(\ttt,\vartheta)$ is defined as $\log\mathbb
N_d(\ttt,\vartheta)$, where $\mathbb N_d(\ttt,\vartheta)$ denotes
the minimal number of points in a $\vartheta$-net in $\ttt$ with
respect to $d$. Later on we use the following proposition, see
\cite[Thm.\ 1.3.3]{Adler07} and \cite[Corollary 1.3.4]{Adler07},
respectively.
\begin{prop}\label{prop:ME}
There exists a universal constant $K$ such that for a $d$-compact set $\ttt$
\[
\ee \left(\sup_{t\in\ttt} X(t)\right)\le K\int_0^{\diam(\ttt)/2}
\sqrt{\mathbb H_d\left(\ttt,\vartheta\right)}\,\rm d\vartheta.
\]
and for all $\zeta>0$
\[
\ee\left( \sup_{\substack{(s,t)\in\ttt\times\ttt\\d(s,t)<\zeta}}
|X(t)-X(s)|\right)\le K\int_0^{\zeta} \sqrt{\mathbb
H_d\left(\ttt,\vartheta\right)}\,\rm d\vartheta.
\]
\end{prop}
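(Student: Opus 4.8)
The plan is to prove both inequalities by the classical chaining argument; they are the two standard forms of Dudley's metric entropy bound, valid for any centered Gaussian process on a $d$-compact index set, so the specific structure $d(t,s)=\sigma(|t-s|)$ plays no role. I focus on the first inequality, the second being a localized variant. Since $X$ is centered we may replace $\sup_{t\in\ttt}X(t)$ by $\sup_{t\in\ttt}(X(t)-X(t_0))$ for a fixed $t_0\in\ttt$, which is convenient for telescoping. Write $D:=\diam(\ttt)$ and, for each integer $n\ge 0$, let $S_n\subset\ttt$ be a minimal $(D2^{-n})$-net, so that $|S_n|=\mathbb N_d(\ttt,D2^{-n})=\exp(\mathbb H_d(\ttt,D2^{-n}))$ and $S_0$ may be taken to be the single point $\{t_0\}$ because $D=\diam(\ttt)$. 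For each $t\in\ttt$ let $\pi_n(t)\in S_n$ be a nearest point, so $d(t,\pi_n(t))\le D2^{-n}$.

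First I would record the Gaussian maximal inequality: if $Z_1,\dots,Z_M$ are centered Gaussian with variances at most $v^2$, then $\ee\max_i Z_i\le v\sqrt{2\log M}$, which follows by bounding $\ee\max_i Z_i\le\theta^{-1}\log\sum_i\ee e^{\theta Z_i}$ and optimizing over $\theta>0$. Next, the chaining identity, valid a.s.\ because the sample-path continuity of $X$ (guaranteed here by condition {\bf C}) ensures $X(\pi_n(t))\to X(t)$, reads
\[
X(t)-X(t_0)=\sum_{n\ge 1}\bigl(X(\pi_n(t))-X(\pi_{n-1}(t))\bigr).
\]
Bounding each level separately, the increment $X(\pi_n(t))-X(\pi_{n-1}(t))$ has standard deviation $d(\pi_n(t),\pi_{n-1}(t))\le D2^{-n}+D2^{-(n-1)}=3D2^{-n}$, and as $t$ varies there are at most $|S_n|\,|S_{n-1}|\le|S_n|^2$ such increments. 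The maximal inequality therefore gives $\ee\sup_t|X(\pi_n(t))-X(\pi_{n-1}(t))|\le 6\,D2^{-n}\sqrt{\mathbb H_d(\ttt,D2^{-n})}$.

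It then remains to sum over $n$ and pass to the integral. Using that $\vartheta\mapsto\mathbb H_d(\ttt,\vartheta)$ is non-increasing, on each interval $[D2^{-(n+1)},D2^{-n}]$ one has $D2^{-n}\sqrt{\mathbb H_d(\ttt,D2^{-n})}\le 2\int_{D2^{-(n+1)}}^{D2^{-n}}\sqrt{\mathbb H_d(\ttt,\vartheta)}\,\mathrm d\vartheta$, and summing these disjoint intervals reconstitutes $(0,D/2]$; this yields the first claim with a universal constant $K$. For the second inequality I would run the same chaining but start it at the scale $\zeta$: fix the least $N$ with $D2^{-N}\le\zeta$, decompose $X(t)-X(s)$ for $d(s,t)<\zeta$ as the two tail chains $X(t)-X(\pi_N(t))$ and $X(\pi_N(s))-X(s)$ plus the coarse increment $X(\pi_N(t))-X(\pi_N(s))$, bound the two tails by the integral over $(0,\zeta]$ exactly as above, and control the coarse term (whose endpoints lie within $3\zeta$ of each other) by one further application of the maximal inequality, which is again comparable to $\int_0^\zeta\sqrt{\mathbb H_d(\ttt,\vartheta)}\,\mathrm d\vartheta$.

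The routine bookkeeping is the sum-to-integral comparison and the tracking of universal constants, which I would not optimize. The genuinely delicate point is the justification of the chaining limit and of the measurability of the supremum over the uncountable index set $\ttt$: this rests on $d$-compactness together with a.s.\ sample-path continuity, which allow one to replace $\ttt$ by a countable dense subset without changing the supremum and guarantee that the telescoping series converges a.s.\ to $X(t)-X(t_0)$. If the entropy integral diverges both bounds hold vacuously, so the content lies entirely in the convergent case.
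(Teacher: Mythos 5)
The paper does not actually prove this proposition: it is imported verbatim from Adler and Taylor (Theorem 1.3.3 and Corollary 1.3.4 of the cited monograph), so there is no in-paper argument to compare against. Your chaining proof is the standard derivation of exactly those two bounds and is correct as sketched: the dyadic nets $S_n$, the level-by-level Gaussian maximal inequality over at most $|S_n|\,|S_{n-1}|$ increments of standard deviation at most $3D2^{-n}$, and the sum-to-integral comparison using monotonicity of $\vartheta\mapsto\mathbb H_d(\ttt,\vartheta)$ are all in order; and for the modulus bound your choice of the least $N$ with $D2^{-N}\le\zeta$ (hence $D2^{-N}>\zeta/2$) is precisely what lets the coarse term $\zeta\sqrt{\mathbb H_d(\ttt,D2^{-N})}$ be absorbed into $\int_0^\zeta\sqrt{\mathbb H_d(\ttt,\vartheta)}\,\mathrm d\vartheta$. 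Two pieces of bookkeeping are worth making explicit if you write this out in full: the additive $\sqrt{2\log 2}$ terms coming from the union over signs and over pairs at each level sum to a constant multiple of $D$, and this is dominated by the entropy integral only because $\mathbb H_d(\ttt,\vartheta)\ge\log 2$ for all $\vartheta<\diam(\ttt)/2$ (two points realizing the diameter cannot share a net point at that scale); and when $|S_n|=|S_{n-1}|=1$ the level contributes zero, so the degenerate cases are harmless. You are also right that the specific form $d(t,s)=\sigma(|t-s|)$ plays no role, and that the chaining limit is the one genuinely delicate point, settled here by separability and the sample-path continuity guaranteed by condition {\bf C}.
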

The quantity $\int_0^\infty\sqrt{\mathbb
H_d\left(\ttt,\vartheta\right)}\,\rm d\vartheta$ is called the Dudley integral.

\section{Main Results}
In this section we formulate the result for the heavy-traffic and
light-traffic regime, respectively. It is emphasized that these
results are highly symmetric. 
Let us first introduce a function $\delta$, such that for every $c>0$
\begin{equation}
\label{eq:defdelta}
\frac{c\delta(c)}{\sigma(\delta(c))}=1.
\end{equation}
By the continuity of $\sigma$, we can choose $\delta$ as $\delta(c)=\inf\{x>0:x/\sigma(x)=1/c\}$. From the definition of $\delta$ it follows that $\delta\in\rv_0(1/(\alpha-1))$ under {\bf RV}$_\infty$ and $\delta\in\rv_\infty(1/(\lambda-1))$ under {\bf RV}$_0$.

\subsection{Heavy-traffic Regime}
\label{sec:HT} In the heavy-traffic regime we are interested in the
analysis of $Q_X^{(c)}$ as $c\to0$, under the assumption that $X$
satisfies {\bf HT}. The following statement follows from
\cite[Thms.\ 5 and 6]{Dieker05}.
\begin{prop}
\label{thm:convT} If $X$ satisfies {\bf HT}, then
\[
\frac{X(\delta(c)\cdot)}{\sigma(\delta(c))}\dto B_\alpha(\cdot),$ as $c\to0,
\]
in $C(\rr)$ and $\Omega^\gamma$, for any $\gamma>\alpha$.
\end{prop}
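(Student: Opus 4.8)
The plan is to prove weak convergence in each separable Banach space by combining convergence of finite-dimensional distributions with tightness; for $C(\rr)$ tightness on every $C[-T,T]$ suffices, while for $\Omega^\gamma$ we invoke \autoref{prop:convO}. Write $X^{(c)}(t):=X(\delta(c)t)/\sigma(\delta(c))$, which is again a centered Gaussian process, so its finite-dimensional distributions are determined by its covariance. By \eqref{eq:cov},
\[
\Cov\left(X^{(c)}(t),X^{(c)}(s)\right)=\half\,\frac{\sigma^2(\delta(c)t)+\sigma^2(\delta(c)s)-\sigma^2(\delta(c)|t-s|)}{\sigma^2(\delta(c))}.
\]
Since $\delta(c)\toi$ as $c\to0$ and $\sigma\in\rv_\infty(\alpha)$ (with $\sigma^2$ even), $\sigma^2(\delta(c)u)/\sigma^2(\delta(c))\to|u|^{2\alpha}$ for every fixed $u$; hence the right-hand side tends to $\half(|t|^{2\alpha}+|s|^{2\alpha}-|t-s|^{2\alpha})=\Cov(B_\alpha(t),B_\alpha(s))$ by \eqref{eq:covB}. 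Convergence of the finite-dimensional distributions follows.

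For tightness in $C[-T,T]$, \eqref{eq:tight1} is immediate since $X^{(c)}(0)=0$, and it remains to check the modulus condition \eqref{eq:modulus}. Let $d_c(t,s):=\sqrt{\ee|X^{(c)}(t)-X^{(c)}(s)|^2}=\sigma(\delta(c)|t-s|)/\sigma(\delta(c))$. Because $|t-s|\le\zeta$ forces $d_c(t,s)\le\sigma(\delta(c)\zeta)/\sigma(\delta(c))$, the increment bound of \autoref{prop:ME} yields
\[
\ee\sup_{|t-s|\le\zeta}\left|X^{(c)}(t)-X^{(c)}(s)\right|\le K\int_0^{\sigma(\delta(c)\zeta)/\sigma(\delta(c))}\sqrt{\mathbb H_{d_c}\left([-T,T],\vartheta\right)}\,\mathrm{d}\vartheta.
\]
Covering $[-T,T]$ by a Euclidean grid of spacing $h$ produces $d_c$-balls of radius $\sigma(\delta(c)h)/\sigma(\delta(c))$, so $\mathbb N_{d_c}([-T,T],\vartheta)\le 1+2T/h_c(\vartheta)$ with $\sigma(\delta(c)h_c(\vartheta))=\vartheta\,\sigma(\delta(c))$. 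The point is to bound this uniformly in $c$: Potter's bounds for $\sigma\in\rv_\infty(\alpha)$ give, for any small $\varepsilon>0$ and all sufficiently small $c$, an estimate $h_c(\vartheta)\ge C\vartheta^{1/(\alpha-\varepsilon)}$, so that $\mathbb H_{d_c}([-T,T],\vartheta)\le C'+\tfrac{1}{\alpha-\varepsilon}\log(1/\vartheta)$ uniformly in $c$. Consequently the integral above is finite and, because the upper limit $\sigma(\delta(c)\zeta)/\sigma(\delta(c))\le C''\zeta^{\alpha-\varepsilon}$ tends to $0$ as $\zeta\to0$ uniformly in small $c$, it vanishes as $\zeta\to0$ uniformly in $c$; Markov's inequality then gives \eqref{eq:modulus}. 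This establishes convergence in $C(\rr)$.

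For $\Omega^\gamma$ with $\gamma>\alpha$, the projections are tight by the previous paragraph, so by \autoref{prop:convO} it suffices to verify \eqref{eq:prop:convO}, for which, by Markov's inequality, it is enough to show $\ee\sup_{|t|\ge T}|X^{(c)}(t)|/(1+|t|^\gamma)\to0$ as $T\toi$, uniformly in small $c$. Split $\{|t|\ge T\}$ into dyadic blocks $\pm[2^k,2^{k+1}]$ with $2^k\ge T$. On such a block, $1+|t|^\gamma\ge 2^{k\gamma}$, while bounding $|X^{(c)}(t)|$ by $|X^{(c)}(2^k)|$ plus the oscillation and applying both parts of \autoref{prop:ME} shows $\ee\sup_{t\in[2^k,2^{k+1}]}|X^{(c)}(t)|\le C2^{k(\alpha+\varepsilon)}$, uniformly in small $c$ by Potter's bounds (the sub-polynomial entropy factor being absorbed into $\varepsilon$). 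Summing, $\ee\sup_{|t|\ge T}|X^{(c)}(t)|/(1+|t|^\gamma)\le 2C\sum_{k:\,2^k\ge T}2^{-k(\gamma-\alpha-\varepsilon)}$, which for $\varepsilon<\gamma-\alpha$ converges and tends to $0$ as $T\toi$, uniformly in small $c$. This yields tightness, hence convergence, in $\Omega^\gamma$.

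The main obstacle is the uniformity in $c$ of all these entropy and Dudley estimates: for each fixed $c$ the bounds are classical, but the modulus-of-continuity and tail controls must hold simultaneously for the whole family $\{X^{(c)}\}$ as $c\to0$. Securing this requires the uniform (Potter) form of regular variation for the large-scale behaviour together with condition {\bf C}, which tames $\sigma$ near the origin --- precisely the regime $\delta(c)|t-s|\to0$ where regular variation at infinity is unavailable --- thereby guaranteeing finiteness of the Dudley integral near the diagonal.
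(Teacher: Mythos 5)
The paper does not actually prove \autoref{thm:convT}; it imports it from Dieker's Theorems 5 and 6. Your argument is a reconstruction whose architecture (f.d.d.\ convergence from \eqref{eq:cov}--\eqref{eq:covB}, entropy-based tightness in $C[-T,T]$, dyadic blocks plus \autoref{prop:convO} for $\Omega^\gamma$) closely mirrors the paper's own proof of the light-traffic counterpart \autoref{thm:convLT}. The f.d.d.\ step and the reduction of both tightness claims to Dudley-integral estimates are fine.

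The gap is in the entropy bound. You assert that Potter's bound for $\sigma\in\rv_\infty(\alpha)$ yields $h_c(\vartheta)\ge C\vartheta^{1/(\alpha-\varepsilon)}$, hence $\mathbb H_{d_c}([-T,T],\vartheta)\le C'+\frac{1}{\alpha-\varepsilon}\log(1/\vartheta)$, uniformly in small $c$ and for \emph{all} $\vartheta>0$. This fails for $\vartheta$ below roughly $\sigma(A)/\sigma(\delta(c))$, where $A$ is the threshold in Potter's theorem: there the defining relation $\sigma(\delta(c)h_c(\vartheta))=\vartheta\,\sigma(\delta(c))$ forces $\delta(c)h_c(\vartheta)\le A$, and regular variation at infinity says nothing about $\sigma$ at such arguments. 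Under condition {\bf C} alone, $\sigma(y)$ may decay as slowly as $|\log y|^{-(1+\varepsilon)/2}$ as $y\to0$, so $h_c(\vartheta)$ can be as small as $\delta(c)^{-1}\exp\left(-c'\left(\vartheta\,\sigma(\delta(c))\right)^{-2/(1+\varepsilon)}\right)$, making $\mathbb H_{d_c}(\vartheta)$ of order $\left(\vartheta\,\sigma(\delta(c))\right)^{-2/(1+\varepsilon)}+\log\delta(c)$ --- neither logarithmic in $1/\vartheta$ nor uniform in $c$. The conclusion survives, but only because the square root of this quantity integrated over $\vartheta\in\left(0,\sigma(A)/\sigma(\delta(c))\right]$ tends to $0$ as $c\to0$ (both $(1+\varepsilon)^{-1}<1$ and $\sigma(\delta(c))\toi$ are essential here). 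That separate near-diagonal computation is precisely where {\bf C} enters, has no analogue in the light-traffic proof (where {\bf RV}$_0$ covers the diagonal via Potter's bound at zero), and is the one genuinely new estimate this proposition requires; you correctly identify it as the obstacle in your closing paragraph but never carry it out. The same omission recurs in the $\Omega^\gamma$ step, where the claim $\ee\sup_{t\in[2^k,2^{k+1}]}|X^{(c)}(t)|\le C2^{k(\alpha+\varepsilon)}$ ``uniformly in small $c$'' again presupposes uniform entropy control near $\vartheta=0$ that Potter's bound does not supply.
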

In fact, \autoref{thm:convT} holds for any function $\delta(c)$ such that $\delta(c)\toi$ as $c\to 0$. Condition {\bf C} (which is one of the requirements of {\bf HT}) plays a crucial role in proving tightness both in $C[-T,T]$, for some $T>0$, and in $\Omega^\gamma$. 

Combining \autoref{thm:convT} with the definition of $\delta$ leads to the following statement.
\begin{cor}
\label{cor:conv}
If $X$ satisfies {\bf HT}, then
\[
\frac{X(\delta(c)\cdot) -c\delta(c)\id(\cdot)}{\sigma(\delta(c))}
\dto
B_{\alpha}(\cdot)-\id(\cdot)$ as $c\to0,
\]
in $C(\rr)$.
\end{cor}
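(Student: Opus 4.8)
The plan is to reduce the statement to \autoref{thm:convT} by an exact algebraic simplification followed by the continuous mapping theorem. First I would exploit the defining relation \eqref{eq:defdelta} for $\delta$, namely $c\delta(c)/\sigma(\delta(c))=1$, to rewrite the deterministic drift term. Dividing the drift $c\delta(c)\,\id(\cdot)$ by $\sigma(\delta(c))$ produces exactly $\id(\cdot)$ for \emph{every} $c>0$ --- not merely in the limit --- since the scalar factor is identically $1$. Consequently the left-hand side of the claim equals
\[
\frac{X(\delta(c)\cdot)}{\sigma(\delta(c))}-\id(\cdot).
\]
This is the point that makes the normalization by $\delta$ natural: $\delta$ is chosen precisely so that the rescaled linear drift collapses to the identity, independently of $c$.

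Next I would invoke \autoref{thm:convT}, which already supplies $X(\delta(c)\cdot)/\sigma(\delta(c))\dto B_\alpha(\cdot)$ in $C(\rr)$ as $c\to0$. It then only remains to check that subtracting the fixed function $\id$ preserves this weak convergence. For this I would apply the continuous mapping theorem to the map $g:C(\rr)\to C(\rr)$ given by $g(f)=f-\id$. Since $\id\in C(\rr)$ and $C(\rr)$, equipped with the topology of uniform convergence on compacts, is a topological vector space in which translation by a fixed element is continuous, the map $g$ is continuous (in fact a homeomorphism). The continuous mapping theorem then yields
\[
\frac{X(\delta(c)\cdot)}{\sigma(\delta(c))}-\id(\cdot)\dto B_\alpha(\cdot)-\id(\cdot)\quad\text{as}\quad c\to0
\]
in $C(\rr)$, which is exactly the assertion.

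I do not anticipate any genuine obstacle, as the result is a direct corollary; the only items requiring a line of justification are the exact cancellation coming from \eqref{eq:defdelta} and the continuity of the deterministic shift $f\mapsto f-\id$ in the topology of $C(\rr)$. One minor point worth keeping in mind is that the statement is deliberately confined to $C(\rr)$: the analogous convergence in $\Omega^\gamma$ would fail for $\gamma\le 1$, since $\id(t)/(1+|t|^\gamma)\not\to 0$ there and hence $\id\notin\Omega^\gamma$. Restricting to $C(\rr)$ avoids any growth control at infinity and keeps the argument a one-step consequence of \autoref{thm:convT}.
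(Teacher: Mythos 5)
Your proof is correct and matches the paper's (implicit) argument: the paper simply notes that the corollary follows by "combining \autoref{thm:convT} with the definition of $\delta$", which is exactly your exact cancellation $c\delta(c)/\sigma(\delta(c))=1$ followed by continuity of the translation $f\mapsto f-\id$ on $C(\rr)$. Nothing further is needed.
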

Now we are in the position to present the main result of this subsection.
\label{s.main}
\begin{theorem}
\label{th.heavy}
If $X$ satisfies {\bf HT}, then
\begin{equation}
\label{eq:thmHT}
\frac{Q^{(c)}_X(\delta(c)\cdot)}{\sigma(\delta(c))} \dto Q_{B_{\alpha}}^{(1)}(\cdot)$ as $c\to0,
\end{equation}
in $C[0,\infty)$.
\end{theorem}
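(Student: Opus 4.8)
The plan is to express the rescaled queue as a fixed deterministic functional of the rescaled input and then invoke \autoref{thm:convT} together with the continuous-mapping theorem. First I would substitute $s=\delta(c)v$ in the supremum defining $Q^{(c)}_X(\delta(c)t)$ and divide by $\sigma(\delta(c))$. Using the defining relation \eqref{eq:defdelta}, i.e.\ $c\delta(c)=\sigma(\delta(c))$, the drift term rescales exactly to $-(t-v)$, so that
\[
\frac{Q^{(c)}_X(\delta(c)t)}{\sigma(\delta(c))}=\sup_{-\infty<v\le t}\left(\tilde Y^{(c)}(t)-\tilde Y^{(c)}(v)-(t-v)\right),\qquad \tilde Y^{(c)}(u):=\frac{X(\delta(c)u)}{\sigma(\delta(c))}.
\]
Writing the right-hand side as $\Phi(\tilde Y^{(c)})(t)$, the task reduces to proving $\Phi(\tilde Y^{(c)})\dto\Phi(B_\alpha)=Q^{(1)}_{B_\alpha}$ in $C[0,\infty)$, where by \autoref{thm:convT} we already have $\tilde Y^{(c)}\dto B_\alpha$ in $\Omega^\gamma$ for every $\gamma\in(\alpha,1)$.

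Fix such a $\gamma\in(\alpha,1)$. I would then show that $\Phi$ is a continuous map from $\Omega^\gamma$ into $C[0,\infty)$. That $\Phi(f)$ is finite and continuous in $t$ for each $f\in\Omega^\gamma$ follows because the linear drift dominates $f$ at $-\infty$ and because, via \eqref{eq:defQ}, $\Phi(f)$ can be expressed as a continuous function of $t$ whenever $f$ is continuous. The crucial ingredient is a \emph{uniform localization} of the inner supremum: since $\gamma<1$, for $v\to-\infty$ the bound $|f(v)|\le\|f\|_{\Omega^\gamma}(1+|v|^\gamma)=o(|v|)$ shows that $f(t)-f(v)-(t-v)\to-\infty$, while the value at $v=t$ equals $0$. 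Hence for every $T>0$ there is an $M$, depending on $f$ only through $\|f\|_{\Omega^\gamma}$, such that for all $t\in[0,T]$ the supremum over $v\le t$ coincides with the supremum over the compact interval $[-M,t]\subset[-M,T]$.

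With the localization in hand, continuity is routine. On $[-M,T]$ the weight $1+|v|^\gamma$ is bounded by a constant $c_M$, so $\|f-g\|_{[-M,T]}\le c_M\|f-g\|_{\Omega^\gamma}$; since $M$ may be chosen uniformly over an $\Omega^\gamma$-neighbourhood of $f$, and since the supremum functional is $1$-Lipschitz, one obtains $\|\Phi(f)-\Phi(g)\|_{[0,T]}\le 2c_M\|f-g\|_{\Omega^\gamma}$ for $g$ close to $f$. As $T$ is arbitrary, $\Phi:\Omega^\gamma\to C[0,\infty)$ is continuous at every point of $\Omega^\gamma$, in particular $\pp$-a.s.\ at $B_\alpha$ (which lies in $\Omega^\gamma$ a.s.\ for $\gamma>\alpha$). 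Applying the continuous-mapping theorem to \autoref{thm:convT} then yields \eqref{eq:thmHT}.

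The main obstacle is precisely the uniform localization step, which is what forces the use of the weighted space $\Omega^\gamma$ rather than plain local uniform convergence: the defining supremum runs over the infinite horizon $(-\infty,t]$, so convergence on compacts is by itself not enough to pass to the limit. The role of the condition $\gamma<1$ (available because $\alpha<1$) is to guarantee that the negative drift eventually overwhelms the admissible growth of the sample paths, making the localization radius $M$ finite and, importantly, stable under small perturbations in the $\Omega^\gamma$-norm; this uniformity in both $t\in[0,T]$ and in a neighbourhood of $f$ is what upgrades pointwise finiteness into genuine continuity of $\Phi$.
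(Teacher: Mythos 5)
Your proposal is correct, but it follows a genuinely different route from the paper's. The paper proves \autoref{th.heavy} by mimicking its proof of \autoref{thm:LT}, i.e.\ by the classical three-step programme: convergence of one-dimensional distributions (stationarity and time-reversibility reduce to $t=0$; the infinite-horizon supremum is truncated to $[0,T]$ and the tail is controlled via the $\Omega^\gamma$-tightness criterion \eqref{eq:prop:convO}), convergence of finite-dimensional distributions by the same truncation, and a separate tightness argument in $C[0,T]$ that uses \eqref{eq:defQ} to bound the modulus of continuity of the queue by that of the free process $X(\delta(c)\cdot)-c\delta(c)\id(\cdot)$. You instead realize the rescaled queue as a fixed functional $\Phi$ of the rescaled input and prove that $\Phi\colon\Omega^\gamma\to C[0,\infty)$ is continuous for $\gamma\in(\alpha,1)$, so a single application of the continuous-mapping theorem to \autoref{thm:convT} concludes. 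The analytic heart is the same in both arguments --- for $\gamma<1$ the unit negative drift dominates the $O(|v|^\gamma)$ growth of paths in $\Omega^\gamma$, localizing the supremum to a compact window --- but you exploit it once, to obtain a localization radius $M$ uniform in $t\in[0,T]$ and over an $\Omega^\gamma$-ball, hence continuity of $\Phi$, whereas the paper reuses it separately in each of its three steps. Your route is shorter and makes the structural reason for the theorem transparent (the reflection map is continuous on the weighted space in which the input converges); the paper's route avoids having to verify continuity of an infinite-horizon functional and only invokes the tightness half of the $\Omega^\gamma$-convergence. Two details you implicitly rely on are indeed available and worth stating explicitly: $X\in\Omega^\gamma$ a.s.\ under {\bf HT} (so that $\Phi$ may be applied to $X(\delta(c)\cdot)/\sigma(\delta(c))$ at all), and $(\alpha,1)\neq\emptyset$ because $\alpha<1$.
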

We postpone the proof of \autoref{th.heavy} to \autoref{sec:proof}.
\begin{rem}
Theorem \ref{th.heavy} extends the findings of \cite[Theorem
3.2]{Debicki04a} where, under the heavy-traffic regime, the weak
convergence in $C[0,\infty)$ of
$Q^{(c)}_X(\delta(c)\cdot)/\sigma(\delta(c))$ as $c\to0$ was
obtained for the class of input processes having differentiable
sample paths a.s., i.e., of the form $X(t)=\int_0^t Z(s){\rm d}s$,
where $\{Z(s):s\ge0\}$ is a stationary centered Gaussian process
whose variance function satisfies specific regularity conditions.
\end{rem}

\subsection{Light-traffic Regime}
\label{sec:LT} In the light-traffic regime we analyze the
convergence of $Q_X^{(c)}$ as $c\to\infty$, under the assumption
that $X$ satisfies {\bf LT}. We begin by stating the counterpart of
\autoref{thm:convT}.
\begin{prop}
\label{thm:convLT} If $X$ satisfies {\bf RV}$_0$, then 
\[
\frac{X(\delta(c)\cdot)}{\sigma(\delta(c))}\dto B_\lambda(\cdot),$ as $c\toi,
\]
in $C(\rr)$. If, moreover, $X$ satisfies {\bf LT}, then the
convergence also holds in $\Omega^\gamma$, for any
$\gamma>\max\{\lambda,\alpha\}$.
\end{prop}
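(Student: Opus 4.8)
The plan is to mirror the proof of \autoref{thm:convT}, exploiting the symmetry between the two regimes, and to establish weak convergence first in $C[-T,T]$ for every $T>0$ (via finite-dimensional convergence plus tightness), then in $C(\rr)$, and finally, under {\bf LT}, in $\Omega^\gamma$. Write $X^{(c)}:=X(\delta(c)\cdot)/\sigma(\delta(c))$ and note that under {\bf RV}$_0$ one has $\delta\in\rv_\infty(1/(\lambda-1))$ with $1/(\lambda-1)<0$, so $\delta(c)\to0$ as $c\toi$. Since $X$ is centered Gaussian with stationary increments, \eqref{eq:cov} gives
\[
\Cov\left(X^{(c)}(t),X^{(c)}(s)\right)=\half\left(\frac{\sigma^2(\delta(c)s)}{\sigma^2(\delta(c))}+\frac{\sigma^2(\delta(c)t)}{\sigma^2(\delta(c))}-\frac{\sigma^2(\delta(c)|t-s|)}{\sigma^2(\delta(c))}\right),
\]
and because $\sigma^2\in\rv_0(2\lambda)$ each ratio converges to the corresponding power of its argument as $\delta(c)\to0$, reproducing exactly the covariance \eqref{eq:covB} of $B_\lambda$. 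Convergence of all finite-dimensional distributions follows immediately.

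For tightness in $C[-T,T]$, condition \eqref{eq:tight1} is trivial since $X^{(c)}(0)=0$, so the work lies in the modulus condition \eqref{eq:modulus}. Let $d_c(t,s)=\sigma(\delta(c)|t-s|)/\sigma(\delta(c))$ denote the standardized semimetric of $X^{(c)}$. I would bound the expected increment modulus by the second estimate of \autoref{prop:ME}:
\[
\ee\left(\sup_{|t-s|\le\zeta}\left|X^{(c)}(t)-X^{(c)}(s)\right|\right)\le K\int_0^{\omega_c(\zeta)}\sqrt{\mathbb H_{d_c}\left([-T,T],\vartheta\right)}\,{\rm d}\vartheta,
\]
where $\omega_c(\zeta)=\sup_{|t-s|\le\zeta}d_c(t,s)$, which by the uniform convergence theorem for regularly varying functions is of order $\zeta^\lambda$, uniformly in $c$. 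Using $\sigma^{-1}\in\rv_0(1/\lambda)$ one checks that a $\vartheta$-net for $d_c$ corresponds to a Euclidean mesh of order $\vartheta^{1/\lambda}$, so Potter's bounds yield a bound on $\mathbb H_{d_c}([-T,T],\vartheta)$ of order $\log(1/\vartheta)$ that is uniform in $c$ for $c$ large. Since {\bf C} holds automatically under {\bf RV}$_0$, the resulting entropy integral converges and tends to $0$ as $\zeta\to0$, uniformly in $c$. Writing $M_c(\zeta)$ for the supremum in \eqref{eq:modulus} and noting $\sup_{|t-s|\le\zeta}\Var(X^{(c)}(t)-X^{(c)}(s))=\omega_c(\zeta)^2\to0$, the Borell--TIS concentration inequality upgrades the bound on $\ee M_c(\zeta)$ to the required bound on $\prob{M_c(\zeta)\ge\eta}$, giving \eqref{eq:modulus}. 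Combined with the finite-dimensional convergence, this yields convergence in $C[-T,T]$ for every $T$, hence in $C(\rr)$.

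For the statement in $\Omega^\gamma$ under {\bf LT}, the projections onto $C[-T,T]$ are already tight by the previous step, so by \autoref{prop:convO} it remains to verify the tail condition \eqref{eq:prop:convO}. Here I would split $\{|t|\ge T\}$ into dyadic blocks $\{2^kT\le|t|<2^{k+1}T\}$, $k\ge0$, bound $\ee\sup|X^{(c)}(t)|$ and $\sup\Var(X^{(c)}(t))$ on each block by \autoref{prop:ME} together with the decay of $(1+|t|^\gamma)^{-1}$, apply Gaussian concentration blockwise, and sum over $k$. The essential point, and the main obstacle, is that as $t$ ranges over $|t|\ge T$ while $\delta(c)\to0$, the argument $\delta(c)t$ sweeps through both the small-scale region (governed by {\bf RV}$_0$) and the large-scale region (governed by {\bf RV}$_\infty$); controlling $\sigma(\delta(c)|t|)/\sigma(\delta(c))$ uniformly in $c$ therefore requires Potter bounds at both $0$ and $\infty$, which is precisely what produces the two exponents $\lambda$ and $\alpha$. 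Choosing $\gamma>\max\{\lambda,\alpha\}$ makes the per-block bounds summable and forces the double limit in \eqref{eq:prop:convO} to vanish, completing the proof.
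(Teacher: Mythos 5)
Your proposal is correct and follows essentially the same route as the paper: finite-dimensional convergence from the regular variation of $\sigma^2$ at zero, tightness in $C[-T,T]$ via the metric-entropy bound of \autoref{prop:ME} combined with Potter's bound (giving a $\log(1/\vartheta)$ entropy estimate uniform in large $c$), and the tail condition \eqref{eq:prop:convO} via a blockwise entropy estimate using Potter bounds at both $0$ and $\infty$, which is exactly what the paper packages as \autoref{lem:sigma}. The only cosmetic differences are that the paper passes from the expected modulus of continuity to the probability bound by Markov's inequality rather than Borell--TIS, and uses blocks $[e^j,e^{j+1}]$ rather than dyadic ones.
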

Analogously to \autoref{thm:convT}, \autoref{thm:convLT} holds
for any function $\delta(c)$ such that $\delta(c)\to0$ as $c\toi$.
As in the heavy-traffic case, combining \autoref{thm:convLT} with the definition of $\delta$ leads to the counterpart of \autoref{cor:conv}.
\begin{cor}
\label{cor:convLT}
If $X$ satisfies {\bf RV}$_0$, then
\[
\frac{X(\delta(c)\cdot) -c\delta(c)\id(\cdot)}{\sigma(\delta(c))}
\dto
B_{\lambda}(\cdot)-\id(\cdot)$ as $c\toi,
\]
in $C(\rr)$.
\end{cor}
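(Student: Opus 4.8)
The plan is to obtain this as an immediate consequence of \autoref{thm:convLT}, in complete analogy with the way \autoref{cor:conv} was derived from \autoref{thm:convT} in the heavy-traffic regime. The crucial point is that, after normalization by $\sigma(\delta(c))$, the subtracted drift is a \emph{fixed} deterministic path that does not depend on $c$. Indeed, the defining relation \eqref{eq:defdelta} states that $c\delta(c)/\sigma(\delta(c))=1$, whence
\[
\frac{c\delta(c)\,\id(\cdot)}{\sigma(\delta(c))}=\frac{c\delta(c)}{\sigma(\delta(c))}\,\id(\cdot)=\id(\cdot),
\]
so that
\[
\frac{X(\delta(c)\cdot)-c\delta(c)\,\id(\cdot)}{\sigma(\delta(c))}=\frac{X(\delta(c)\cdot)}{\sigma(\delta(c))}-\id(\cdot).
\]
Thus the statement amounts to subtracting the single, $c$-independent element $\id\in C(\rr)$ from the converging family provided by \autoref{thm:convLT}.

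First I would invoke \autoref{thm:convLT}, which under {\bf RV}$_0$ already yields $X(\delta(c)\cdot)/\sigma(\delta(c))\dto B_\lambda(\cdot)$ as $c\toi$ in $C(\rr)$. I would then apply the continuous mapping theorem to the translation map $\Phi:C(\rr)\to C(\rr)$ given by $\Phi(f)=f-\id$. Since $\id$ is a fixed element of $C(\rr)$, this map is a translation and satisfies $\|\Phi(f_1)-\Phi(f_2)\|_{[-T,T]}=\|f_1-f_2\|_{[-T,T]}$ for every $T>0$; hence it is an isometry for each seminorm generating the topology of uniform convergence on compact sets, and in particular continuous. As $B_\lambda$ has continuous sample paths, the limit $B_\lambda-\id$ indeed lies in $C(\rr)$, and the continuous mapping theorem gives
\[
\frac{X(\delta(c)\cdot)}{\sigma(\delta(c))}-\id(\cdot)\dto B_\lambda(\cdot)-\id(\cdot)\quad\text{as}\quad c\toi,
\]
which is exactly the claimed convergence. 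Equivalently, since the subtracted path is deterministic and does not depend on $c$, one may invoke a Slutsky-type argument.

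I expect no genuine obstacle here: all the substantive probabilistic work, namely the finite-dimensional convergence and tightness underpinning the weak limit, is already contained in \autoref{thm:convLT}. The only points requiring verification are the elementary algebraic simplification via \eqref{eq:defdelta} and the continuity of the fixed-path translation in the topology of uniform convergence on compact sets, both of which are routine. The proof is therefore essentially identical to that of \autoref{cor:conv}, with $c\to0$ replaced by $c\toi$, $\alpha$ by $\lambda$, and \autoref{thm:convT} by \autoref{thm:convLT}.
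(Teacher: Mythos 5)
Your proposal is correct and matches the paper's (implicit) argument exactly: the paper derives \autoref{cor:convLT} precisely by "combining \autoref{thm:convLT} with the definition of $\delta$", i.e.\ using \eqref{eq:defdelta} to reduce the drift term to the fixed path $\id$ and then translating the weak limit, which is what you do via the continuous mapping theorem. No gaps.
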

The main result of this subsection is now stated as follows.
\label{s.main.LT}
\begin{theorem}
\label{thm:LT}
If $X$ satisfies {\bf LT}, then
\begin{equation}
\label{eq:thmLT}
\frac{Q^{(c)}_X(\delta(c)\cdot)}{\sigma(\delta(c))} \dto Q_{B_{\lambda}}^{(1)}(\cdot)$ as $c\toi,
\end{equation}
in $C[0,\infty)$.
\end{theorem}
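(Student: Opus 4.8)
The plan is to realise the rescaled workload as the image of the rescaled (driftless) input under a single fixed reflection map, and then to invoke the continuous mapping theorem together with the $\Omega^\gamma$-convergence supplied by \autoref{thm:convLT}. The whole argument runs in parallel with the proof of \autoref{th.heavy}: the only changes are that $\alpha$ is replaced by $\lambda$, that \autoref{thm:convLT} and \autoref{cor:convLT} take the place of \autoref{thm:convT} and \autoref{cor:conv}, and that every limit $c\to0$ becomes $c\toi$.

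First I would record the scaling identity. Writing $W^{(c)}:=X(\delta(c)\,\cdot)/\sigma(\delta(c))$ and substituting $s=\delta(c)u$, $t=\delta(c)\tau$ in the definition of $Q^{(c)}_X$, the relation $c\delta(c)/\sigma(\delta(c))=1$ from \eqref{eq:defdelta} gives, for every $\tau\ge0$,
\[
\frac{Q^{(c)}_X(\delta(c)\tau)}{\sigma(\delta(c))}
=\sup_{u\le\tau}\bigl(W^{(c)}(\tau)-W^{(c)}(u)-(\tau-u)\bigr)
=\Phi\bigl(W^{(c)}\bigr)(\tau),
\]
where $\Phi(g)(\tau):=g(\tau)-\tau+\sup_{u\le\tau}\bigl(u-g(u)\bigr)$. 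The same computation for the limit yields $Q^{(1)}_{B_\lambda}=\Phi(B_\lambda)$, so \eqref{eq:thmLT} is precisely the statement $\Phi(W^{(c)})\dto\Phi(B_\lambda)$ in $C[0,\infty)$.

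Next I would fix $\gamma\in(\max\{\lambda,\alpha\},1)$, which is possible because $\lambda,\alpha\in(0,1)$. By \autoref{thm:convLT}, under {\bf LT} we have $W^{(c)}\dto B_\lambda$ in $\Omega^\gamma$ as $c\toi$. The crux is then to show that $\Phi:\Omega^\gamma\to C[0,\infty)$ is continuous, after which the continuous mapping theorem finishes the proof. Continuity is where the choice $\gamma<1$ is essential: for $g\in\Omega^\gamma$ and $u\le0$ one has $u-g(u)\le u+\|g\|_{\Omega^\gamma}(1+|u|^\gamma)$, and since $\gamma<1$ the right-hand side tends to $-\infty$ as $u\to-\infty$. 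Thus the linear drift dominates the sublinear $\Omega^\gamma$-growth, so for each $T$ there is an $M=M(T,\|g\|_{\Omega^\gamma})$, which may be taken uniform over norm-bounded sets, such that $\sup_{u\le\tau}(u-g(u))=\sup_{-M\le u\le\tau}(u-g(u))$ for all $\tau\in[0,T]$. The infinite-horizon supremum therefore reduces to a supremum over the compact set $[-M,T]$, on which $\Omega^\gamma$-convergence coincides with uniform convergence, and a routine estimate then gives $\Phi(g_n)\to\Phi(g)$ in $C[0,T]$ whenever $g_n\to g$ in $\Omega^\gamma$, for every $T>0$.

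The main obstacle is exactly this reduction of the supremum over $(-\infty,\tau]$ to a supremum over a compact time window, uniformly in the approximating functions. This is why the weaker convergence in $C(\rr)$ furnished by \autoref{cor:convLT} does not suffice on its own — local uniform convergence leaves the tails at $-\infty$ uncontrolled — and why one must work in $\Omega^\gamma$; the tightness in $\Omega^\gamma$, for which \autoref{prop:convO} (and behind it the metric-entropy bounds of \autoref{prop:ME}) is the natural tool, is exactly what condition {\bf RV}$_\infty$ buys us on top of {\bf RV}$_0$, by controlling $W^{(c)}(u)=X(\delta(c)u)/\sigma(\delta(c))$ at the large arguments $\delta(c)u$ that arise as $u\to-\infty$. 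Once continuity of $\Phi$ is established, applying the continuous mapping theorem to $W^{(c)}\dto B_\lambda$ gives $\Phi(W^{(c)})\dto\Phi(B_\lambda)=Q^{(1)}_{B_\lambda}$ in $C[0,\infty)$, which is \eqref{eq:thmLT}.
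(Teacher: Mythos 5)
Your argument is correct, but it is organized quite differently from the paper's. You package the entire rescaled workload as $\Phi(W^{(c)})$ for a single reflection map $\Phi(g)(\tau)=g(\tau)-\tau+\sup_{u\le\tau}(u-g(u))$, prove that $\Phi:\Omega^\gamma\to C[0,\infty)$ is continuous for $\gamma<1$ (the unit drift dominating the sublinear $\Omega^\gamma$-growth, so the supremum localizes to a compact window uniformly on norm-bounded sets), and conclude by the continuous mapping theorem applied to the $\Omega^\gamma$-convergence $W^{(c)}\dto B_\lambda$ from \autoref{thm:convLT}. The paper instead runs a three-step program: one-dimensional distributions via time-reversibility plus a probabilistic truncation of the infinite-horizon supremum using \autoref{prop:convO}; finite-dimensional distributions by the same device; and tightness in $C[0,T]$ via stationarity of $Q^{(c)}_X$ together with the representation \eqref{eq:defQ}, which bounds increments of the workload by increments of the net input and reduces the modulus-of-continuity estimate to \autoref{cor:convLT}. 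The underlying ingredients are the same — in particular both proofs lean on the $\Omega^\gamma$ statement of \autoref{thm:convLT}, i.e.\ on {\bf RV}$_\infty$, to control the input at large negative times, and your deterministic localization of the supremum is the pathwise analogue of the paper's tail estimate \eqref{eq:dummy}. What your route buys is economy and transparency: one continuity lemma subsumes all three steps, and the role of $\gamma<1$ (available since $\max\{\lambda,\alpha\}<1$) is made explicit. What the paper's route buys is that it never needs the reflection map's continuity on the whole of $\Omega^\gamma$ as a standalone lemma, and its tightness step exploits stationarity of the workload, which your argument does not require. Your proof is complete as sketched, provided you spell out the routine estimate $\|\Phi(g_n)-\Phi(g)\|_{[0,T]}\le\|g_n-g\|_{[0,T]}+\|g_n-g\|_{[-M,T]}$ once the localization radius $M$ is fixed.
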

We postpone the proof of \autoref{thm:convLT} and \autoref{thm:LT} to \autoref{sec:proof}.
\begin{rem}
The assumption {\bf LT} excludes the class of input processes of the
structure $X(t)=\int_0^t Z(s){\rm d}s$, with $\{Z(s):s\ge0\}$ being
a centered stationary Gaussian process with continuous sample paths
a.s.\ (since $\lambda=1$ in this case). In \cite[Theorem
4.1]{Debicki04a} it was shown that, for this class of Gaussian
processes, $Q^{(c)}_X(0)/\sigma(\delta(c))$ does {\it not}
converge weakly to $Q_{B_{\lambda}}^{(1)}(0)$ as $c\toi$.
\end{rem}

\section{Proofs}
\label{sec:proof} In this section we prove our results, but we start
by presenting an auxiliary result.
\begin{lem}
\label{lem:sigma} If $X$ satisfies {\bf LT}, then for any
$\epsilon>0$, there exist constants $C,a>0$, such that for all $x\le
a$ and $t>0$,
\[
\frac{\sigma(tx)}{\sigma(x)}\le C\times\left\{
\begin{array}{cc}
t^\ell & t\le 1,\\
t^u & t> 1,
\end{array}
\right.
\]
where $\ell:=\min\{\lambda-\epsilon,\alpha+\epsilon\}$ and
$u:=\max\{\alpha+\epsilon,\lambda+\epsilon\}$.
\end{lem}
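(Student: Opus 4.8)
The plan is to obtain the bound by combining the \emph{Potter bounds} for regularly varying functions at the two ends of the half-line, using {\bf RV}$_0$ near the origin and {\bf RV}$_\infty$ near infinity (both available since $X$ satisfies {\bf LT}), and then gluing the two regimes across the intermediate range, where the argument stays bounded away from $0$ and $\infty$ and one may simply use that $\sigma$ is continuous and strictly positive there. First I would fix $\epsilon>0$ and record the two Potter bounds: there are $a>0$ and $C_0\ge1$ with $\sigma(\xi)/\sigma(\eta)\le C_0\max\{(\xi/\eta)^{\lambda-\epsilon},(\xi/\eta)^{\lambda+\epsilon}\}$ whenever $0<\xi,\eta\le a$, and there are $b>a$ and $C_\infty\ge1$ with $\sigma(\xi)/\sigma(\eta)\le C_\infty\max\{(\xi/\eta)^{\alpha-\epsilon},(\xi/\eta)^{\alpha+\epsilon}\}$ whenever $\xi,\eta\ge b$. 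I would take this $a$ as the constant in the statement, fix $0<x\le a$ and $t>0$, write $s:=tx$ for the actual argument, and then split into cases according to the position of $s$ relative to $a$ and $b$.

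The two easy regimes come first. If $s=tx\le a$ — which in particular covers every $t\le1$ — both arguments lie in $(0,a]$, so the Potter bound at the origin gives $\sigma(tx)/\sigma(x)\le C_0\max\{t^{\lambda-\epsilon},t^{\lambda+\epsilon}\}$. For $t\le1$ this equals $C_0t^{\lambda-\epsilon}$, and since $\ell\le\lambda-\epsilon$ and $t\le1$ we obtain $\le C_0t^{\ell}$; for $1<t\le a/x$ it equals $C_0t^{\lambda+\epsilon}\le C_0t^{u}$, because $u\ge\lambda+\epsilon$ and $t>1$. Thus the claimed inequality already holds whenever $tx\le a$.

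What remains is the range $tx>a$, which forces $t>a/x\ge1$, so that only the bound $Ct^{u}$ is needed. Here I would factorise $\sigma(tx)/\sigma(x)=\bigl(\sigma(a)/\sigma(x)\bigr)\bigl(\sigma(tx)/\sigma(a)\bigr)$, bound the first factor by $C_0(a/x)^{\lambda+\epsilon}$ via the Potter bound at the origin (as $a/x\ge1$), and split the second factor according to whether $tx\le b$ or $tx>b$. If $a<tx\le b$, the numerator $\sigma(tx)$ is controlled by $\max_{[a,b]}\sigma$ while $\sigma(a)$ is a fixed positive constant, so the second factor is bounded; using $a/x<t$ the whole product is then at most a constant times $t^{\lambda+\epsilon}\le t^{u}$. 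If $tx>b$, I would factor once more as $\sigma(tx)/\sigma(a)=\bigl(\sigma(b)/\sigma(a)\bigr)\bigl(\sigma(tx)/\sigma(b)\bigr)$ and apply the Potter bound at infinity to the last factor, giving $\sigma(tx)/\sigma(b)\le C_\infty(tx/b)^{\alpha+\epsilon}$. Collecting the powers, after elementary algebra, leaves a residual factor of the form $t^{\alpha+\epsilon}x^{\alpha-\lambda}$ (up to constants).

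The hard part is precisely this residual $x^{\alpha-\lambda}$. When $\alpha\ge\lambda$ it is harmless: it is bounded by $a^{\alpha-\lambda}$ uniformly over $0<x\le a$, and the product is a constant times $t^{\alpha+\epsilon}\le t^{u}$. When $\alpha<\lambda$ the term $x^{-(\lambda-\alpha)}$ blows up as $x\to0$ and must be absorbed into a power of $t$; the key observation is that in this last region the defining constraint $tx>b$ gives $x>b/t$, whence $x^{\alpha-\lambda}=x^{-(\lambda-\alpha)}\le(t/b)^{\lambda-\alpha}$, and substituting this turns $t^{\alpha+\epsilon}x^{\alpha-\lambda}$ into a constant multiple of $t^{\lambda+\epsilon}\le t^{u}$. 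Either way one arrives at $\sigma(tx)/\sigma(x)\le Ct^{u}$ with a constant $C$ depending only on $\epsilon$ (through $a,b,C_0,C_\infty$ and the extrema of $\sigma$ on $[a,b]$), which together with the two easy regimes proves the lemma. I note in passing that the exponent $\ell=\min\{\lambda-\epsilon,\alpha+\epsilon\}$ only weakens the required bound for $t\le1$, so the exponent $\lambda-\epsilon$ produced above already suffices there.
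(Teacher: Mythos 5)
Your proof is correct and follows essentially the same route as the paper's: Potter-type bounds from {\bf RV}$_0$ and {\bf RV}$_\infty$, a case split on whether $tx$ lies below or above the threshold, and the same treatment of the residual factor $t^{\alpha+\epsilon}x^{\alpha-\lambda}$ according to the sign of $\alpha-\lambda$ (bounding $x^{\alpha-\lambda}$ by $a^{\alpha-\lambda}$ or converting it into a power of $t$ via the constraint $tx>b$). The only cosmetic difference is that the paper absorbs your intermediate range $a<tx\le b$ by extending the upper bound $\sigma(y)\le K_2 y^{\alpha+\epsilon}$ down to $y\ge a$ using continuity, whereas you handle it as a separate bounded case.
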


\begin{proof}
Take any $\epsilon>0$, then because $\sigma\in\rv_0(\lambda)$,
there exists an $a\le 1$ such that
\begin{equation}
\label{eq:bound1} \frac{\sigma(tx)}{\sigma(x)}\le 2
t^{\lambda-\epsilon}$, for all $x\le a$ and $tx\le a.
\end{equation}
Moreover, there exists a constant $K_1$ such that $\sigma(x)\ge K_1
x^{\lambda+\epsilon}$ for all $x\le a$.

Because $\sigma\in\rv_\infty(\alpha)$,  there exist constants
$A,K_2>0$ such that $\sigma(x)\le K_2x^{\alpha+\epsilon}$ for all
$x\ge A$. Because $\sigma$ is continuous, we can in fact find a
$K_2$ such that $\sigma(x)\le K_2x^{\alpha+\epsilon}$ for all $x\ge
a$. Therefore
\[
\frac{\sigma(tx)}{\sigma(x)}\le \frac{K_2(tx)^{\alpha+\epsilon}}{K_1
x^{\lambda+\epsilon}}=:Kt^{\alpha+\epsilon}x^{\alpha-\lambda}$, for
all $x\le a$ and $tx\ge a.
\]
Note that, if $\alpha-\lambda\ge0$, then we have
\begin{equation}
\label{eq:bound2} \frac{\sigma(tx)}{\sigma(x)}\le
Ka^{\alpha+\epsilon}t^{\alpha+\epsilon}$, for all $x\le a$ and $tx\ge a.
\end{equation}
If $\alpha-\lambda<0$, then
\begin{equation}
\label{eq:bound3} \frac{\sigma(tx)}{\sigma(x)}\le
Ka^{\alpha-\lambda}t^{\lambda+\epsilon}$, for all $x\le a$ and
$tx\ge a.
\end{equation}
Combining \eqref{eq:bound1}--\eqref{eq:bound3}, we conclude that
there exists a constant $C>0$, such that
\[
\frac{\sigma(tx)}{\sigma(x)} \le
C\max\left\{t^{\lambda-\epsilon},t^{\alpha+\epsilon},t^{\lambda+\epsilon}\right\}$,
for all $x\le a$ and all $t>0.
\]
\end{proof}

\vb

In what follows, we will use the following notation. Let
\[
X^{(c)}(t):=\frac{X(\delta(c)t)}{\sigma(\delta(c))}
\]
and denote the variance of $X^{(c)}$ by $(\sigma^{(c)})^2$, that is,
\[
\sigma^{(c)}(t):=\frac{\sigma(\delta(c)t)}{\sigma(\delta(c))}.
\]
\begin{proof}[{\bf Proof of \autoref{thm:convLT}}]
We begin by showing the convergence in $C(\rr)$. To this end, we
need to show the convergence in $C[-T,T]$ for any fixed $T>0$. 

\vb

{\it Convergence in $C[-T,T]$}: 
From the fact that
$\sigma\in\rv_0(\lambda)$, it is immediate that the
finite-dimensional distributions of $X^{(c)}$ converge in
distribution to $B_\lambda$ as $c\toi$, cf.\
\eqref{eq:cov}-\eqref{eq:covB}, which also implies \eqref{eq:tight1}. Therefore, 
the weak convergence of
$X^{(c)}$ in $C[-T,T]$ follows after showing \eqref{eq:modulus}.

By the Uniform Convergence Theorem, see \cite[Thm.\ 1.5.2]{Bingham87}, for any
$t\in(0,\zeta]$, we have $\sigma^{(c)}(t)\le 2\zeta^{\lambda}$.
Thus, \autoref{prop:ME} yields, for some universal constant
$K>0$,
\begin{align*}
\prob{\sup_{|s-t|\le\zeta}\left|X^{(c)}(t)-X^{(c)}(s)\right|\ge\eta}
&\le
\prob{\sup_{\sigma^{(c)}(|s-t|)\le 2\zeta^{\lambda}}\left|X^{(c)}(t)-X^{(c)}(s)\right|\ge\eta}\\
&\le
\frac{1}{\eta}\:\ee\left(\sup_{\sigma^{(c)}(|s-t|)\le 2\zeta^{\lambda}}\left|X^{(c)}(t)-X^{(c)}(s)\right|\right)\\
&\le \frac{K}{\eta}\int_0^{2\zeta^{\lambda}}\sqrt{\mathbb
H^{(c)}([-T,T],\vartheta)}\,\rm d\vartheta,
\end{align*}
where $\mathbb H^{(c)}([-T,T],\cdot)$ is the metric entropy induced
by $\sigma^{(c)}$.

By Potter's bound \cite[Thm.\ 1.5.6]{Bingham87} for any
$\epsilon,\zeta>0$, $\epsilon<\lambda$ and $t\in(0,\zeta]$ and
sufficiently large $c$ (corresponding to small $\delta(c)$), we have
$\sigma^{(c)}(t)\le 2 t^{\lambda-\epsilon}$. Hence
\[
\mathbb H^{(c)}([-T,T],\vartheta)\le \mathbb H_{\tilde
d}\left([-T,T],\frac{\vartheta}{2}\right),
\]
where $\tilde d$ is a semimetric such that $\tilde
d(s,t)=|t-s|^{\lambda-\epsilon}$. The inverse of $x\mapsto
x^{\lambda-\epsilon}$ is given by $x\mapsto
x^{1/(\lambda-\epsilon)}$, so that
\[
\mathbb H_{\tilde d}([-T,T],\vartheta)\le
\log\left(\frac{T}{\vartheta^{1/(\lambda-\epsilon)}}+1\right) \le
C\log\left(\frac{1}{\vartheta}\right),
\]
for some constant $C>0$ and $\vartheta>0$ small. It follows that
\begin{align*}
\int_0^{2\zeta^{\lambda}}\sqrt{\mathbb
H^{(c)}([-T,T],\vartheta)}\,{\rm d}\vartheta 
&\le 
\sqrt
C\int_0^{2\zeta^{\lambda}}\sqrt{\log\left(\frac{2}{\vartheta}\right)}\,{\rm
d}\vartheta = 2\sqrt C\int_{\zeta^{-\lambda}}^\infty\frac{\sqrt{\log
\vartheta}}{\vartheta^2}\,{\rm d}\vartheta.
\end{align*}
Summarizing, we have
\[
\limsup_{c\toi}\prob{\sup_{|s-t|\le\zeta}\left|X^{(c)}(t)-X^{(c)}(s)\right|\ge\eta}
\le \frac{2K\sqrt
C}{\eta}\int_{{\zeta^{-\lambda}}}^\infty\frac{\sqrt{\log
\vartheta}}{\vartheta^2}\,{\rm d}\vartheta;
\]
we obtain \eqref{eq:modulus} by letting $\zeta\to0$.

\vb

{\it Convergence in $\Omega^\gamma$}: To show the convergence in
$\Omega^\gamma$, we need to verify \eqref{eq:prop:convO}. Observe
that
\begin{align*}
\prob{\sup_{t\ge e^k}
    \frac{|X^{(c)}(t)|}{1+t^\gamma}
    \ge \eta}
&\le
\frac{1}{\eta}\sum_{j=k}^\infty \frac{\ee\sup_{t\in[e^j,e^{j+1}]}|X^{(c)}(t)|}{1+e^{j\gamma}}\\
&\le \frac{1}{\eta}\sum_{j=k}^\infty \frac{\ee
|X^{(c)}(e^j)|}{1+e^{j\gamma}} +
\frac{2}{\eta}\sum_{j=k}^\infty \frac{\ee\sup_{t\in[e^j,e^{j+1}]}X^{(c)}(t)}{1+e^{j\gamma}}\\
&=:I_1(k)+I_2(k).
\end{align*}

$I_1(k)$ and $I_2(k)$ are dealt with separately. According to
\autoref{lem:sigma}, for large $c$ (that is, small $\delta(c)$), we
have
\[
\sigma^{(c)}(t)\le C\times\left\{
\begin{array}{cc}
t^\ell & t\le 1,\\
t^u & t> 1,
\end{array}
\right.
\]
where $\ell$ and $u$ can be chosen such that $\ell,u<\gamma$.
Therefore,
\[
I_1(k)\le \frac{1}{\eta}\sum_{j=k}^\infty
\frac{\sigma^{(c)}(e^j)}{1+e^{j\gamma}} \le
\frac{C}{\eta}\sum_{j=k}^\infty \frac{e^{ju}}{1+e^{j\gamma}},
\]
and the resulting upper bound tends to zero as $k\toi$.

Now focus on $I_2(k).$ For some universal constant $K>0$ and because
of the stationarity of the increments of $X$, \autoref{prop:ME}
yields that $I_2(k)$ is majorized by
\[
\frac{2K}{\eta} \sum_{j=k}^\infty \frac{\displaystyle
\int_0^\infty\sqrt{\mathbb H^{(c)}([e^j,e^{j+1}],\vartheta)}\,\rm
d\vartheta}{1+e^{j\gamma}} = \frac{2K}{\eta} \sum_{j=k}^\infty
\frac{\displaystyle \int_0^\infty\sqrt{\mathbb
H^{(c)}([0,e^j(e-1)],\vartheta)}\,\rm d\vartheta}{1+e^{j\gamma}}.
\]
We will estimate the integrals under the sum by splitting the integration area into $\vartheta\le 1$ and $\vartheta\ge 1$.
 
Observe that, for some constants $C_1,C_2>0$ (that is, not
depending on $j$),
\begin{align*}
\int_0^1\sqrt{\mathbb H^{(c)}([0,e^j(e-1)],\vartheta)}\,\rm
d\vartheta &\le
\int_0^1\sqrt{\log\left(\frac{e^j(e-1)}{2\vartheta^{{1}/{\ell}}}+1\right)}\,\rm d\vartheta\\
&\le
\int_0^1\sqrt{C_1+j+\frac{1}{\ell}\log\left(\frac{1}{\vartheta}\right)}\,\rm d\vartheta\\
&=
\ell e^{\ell(C_1+j)}\int_{C_1+j}^\infty\sqrt\vartheta e^{-\ell\vartheta}\,\rm d\vartheta\\
&\le \ell e^{\ell(C_1+j)}\int_{0}^\infty\sqrt\vartheta
e^{-\ell\vartheta}\,{\rm d}\vartheta= C_2e^{\ell j}.
\end{align*}
Recall that $\ell<\gamma$, so that
\[
\lim_{k\toi}\sum_{j=k}^\infty \frac{\displaystyle
\int_0^1\sqrt{\mathbb H^{(c)}([0,e^j(e-1)],\vartheta)}\,\rm
d\vartheta}{1+e^{j\gamma}}\le \frac{2K}{\eta} \lim_{k\toi}\sum_{j=k}^\infty
\frac{C_2e^{\ell j}}{1+e^{j\gamma}}=0.
\]
So it remains to show the analogous statement for the integration
interval $[1,\infty)$. Using a similar argumentation as the one
above, one can show that
\[
\int_1^\infty\sqrt{\mathbb H^{(c)}([0,e^j(e-1)],\vartheta)}\,{\rm
d}\vartheta \le C_3 e^{uj},
\]
for some constant $C_3>0$, from which the claim is readily obtained.
\end{proof}

\vb

Since the proof of \autoref{th.heavy} is analogous to the proof of
\autoref{thm:LT}, we choose to focus on the light-traffic case only.

\vb

\begin{proof}[{\bf Proof of \autoref{thm:LT}}]
The proof consists of three steps: convergence of the
one-dimensional distributions, the finite-dimensional distributions,
and a tightness argument.

\vb

{\it Step 1: Convergence of one-dimensional distributions}. In this
step we show that, for a fixed $t\ge 0$,
\[
\frac{Q_X^{(c)}(t)}{\sigma(\delta(c))}\dto Q_{B_\lambda}^{(1)}(t),$ as $c\toi.
\]
Since $Q_X^{(c)}$ is stationary, it is enough to show the above
convergence for $t=0$ only. Observe that, due to the
time-reversibility property of Gaussian processes,
\[
Q_X^{(c)}(0)\de\sup_{t\ge 0}\left(X(t)-ct\right) =\sup_{t\ge
0}\left(X(\delta(c)t)-c\delta(c) t\right).
\]
Upon combining \autoref{cor:convLT} with the continuous mapping
theorem,  for each $T>0$,
\[
\sup_{t\in[0,T]}\left(\frac{X(\delta(c)t)-c\delta(c)t}{\sigma(\delta(c))}\right)\dto
\sup_{t\in[0,T]}(B_\lambda(t)-t),$ as $c\toi.
\]
Thus it suffices to show that
\begin{equation}
\label{eq:dummy} \lim_{T\toi}\limsup_{c\toi} \prob{\sup_{t\ge
T}\left(
    \frac{X(\delta(c)t)-c\delta(c)t}{\sigma(\delta(c))}
    \right)\ge \eta}=0,
\end{equation}
for any $\eta>0$. Recall the definition of $X^{(c)}$, so that
\[
\prob{\sup_{t\ge T}\left(
    \frac{X(\delta(c)t)-c\delta(c)t}{\sigma(\delta(c))}
    \right)\ge \eta}
\le \prob{\sup_{t\ge T}
    \frac{|X^{(c)}(t)|}{\eta+t}
    \ge 1},
\]
where we used \eqref{eq:defdelta}. \autoref{thm:convLT} implies that
the family $\{X^{(c)}\}$ is tight in $\Omega^\gamma$, for some
$\gamma\le 1$. Now \eqref{eq:dummy} follows from
\autoref{prop:convO}.

\vb

{\it Step 2: Convergence of finite-dimensional distributions.} The
argumentation of this step is analogous to Step 1. First note that
for any $t_i\ge 0$, $\eta_i>0$ and $s_i<t_i$, where $i=1,\ldots,n$,
for any $n\in\nn$, it follows that
\begin{align*}
\pp\Bigg(&\frac{Q^{(c)}_X(\delta(c)t_i)}{\sigma(\delta(c))}>\eta_i,\,i=1,\ldots,n\Bigg)\\
&=
\prob{\sup_{s\le \delta(c)t_i}\left(\frac{X(\delta(c)t_i)-X(s)-c(\delta(c)t_i-s)}{\sigma(\delta(c))}\right)>\eta_i,\,i=1,\ldots,n}\\
&\le
\prob{\sup_{s\in[s_i,t_i]}\left( \frac{X(\delta(c)t_i)-X(\delta(c)s) -c\delta(c)(t_i-s)}{\sigma(\delta(c))}\right)>\eta_i,i=1,\ldots,n}\\
&\quad +\: \sum_{i=1}^n\prob{\sup_{s\le s_i}\left(
\frac{X(\delta(c)t_i)-X(\delta(c)s)
-c\delta(c)(t_i-s)}{\sigma(\delta(c))}\right)>\eta_i}.
\end{align*}
Now the same procedure can be followed as in Step 1.

\vb

{\it Step 3: Tightness in $C[0,T]$.} In this step, for any $T>0$, we
show the tightness of
$\{Q^{(c)}_X(\delta(c)\cdot)/\sigma(\delta(c))\}$ in $C[0,T]$. Given
that we have established Step 2 already, \eqref{eq:tight1} holds so we
are left with proving \eqref{eq:modulus}, with 
$s,t\in[0,T]$; the remainder of the proof is devoted to settling
this claim.

Stationarity of $Q^{(c)}_X$ implies that $
\{Q^{(c)}_X(\delta(c)t)-Q^{(c)}_X(\delta(c)s):t\ge s\}$ is
distributed as \[\{Q^{(c)}_X(\delta(c)(t-s))-Q^{(c)}_X(0):t\ge s\},
\] so that it suffices to prove \eqref{eq:modulus} for $s=0$ only.
Furthermore, cf.\ \eqref{eq:defQ},
\[
\sup_{0<t\le\zeta}\left|Q_X^{(c)}(\delta(c)t)-Q_X^{(c)}(0)\right|
\le 
2\sup_{0<t\le\zeta}\left|X(\delta(c)t)-c\delta(c) t\right|.
\]
From \autoref{cor:convLT} it follows that
\[
\sup_{0<t\le\zeta}\frac{\left|X(\delta(c)t)-c\delta(c) t\right|}{\sigma(\delta(c))}\dto
\sup_{0<t\le\zeta}\left|B_{\lambda}(t)-t\right|,$ as $c\toi.
\]
Now notice that for $\zeta<\eta/4$, by the self-similarity of $B_\lambda$,
\[
\prob{\sup_{0<t\le\zeta}|B_\lambda(t)-t|\ge\frac{\eta}{2}}
\le 
2\prob{\sup_{0<t\le1}B_\lambda(t)\ge\frac{\eta}{4}\zeta^{-\lambda}}.
\]
Now it is straightforward to conclude that the last expression tends to zero as $\zeta\to 0$.
\end{proof}

\newpage
\small
\bibliography{Gaussian.HT}

\begin{thebibliography}{19}
\providecommand{\natexlab}[1]{#1}
\providecommand{\url}[1]{\texttt{#1}}
\expandafter\ifx\csname urlstyle\endcsname\relax
  \providecommand{\doi}[1]{doi: #1}\else
  \providecommand{\doi}{doi: \begingroup \urlstyle{rm}\Url}\fi

\bibitem[Adler(1990)]{Adler90}
R.J. Adler.
\newblock \emph{An introduction to continuity, extrema, and related topics for
  general Gaussian processes}, volume~12 of \emph{Lecture Notes-Monograph
  Series}.
\newblock IMS, 1990.

\bibitem[Adler and Taylor(2007)]{Adler07}
R.J. Adler and J.E. Taylor.
\newblock \emph{Random fields and Geometry}.
\newblock Springer, 2007.

\bibitem[Billingsley(1999)]{Billingsley99}
P.~Billingsley.
\newblock \emph{Convergence of probability measures}.
\newblock John Wiley \& Sons Inc., New York, second ed. edition, 1999.

\bibitem[Bingham et~al.(1987)Bingham, Goldie, and Teugels]{Bingham87}
N.H. Bingham, C.M. Goldie, and J.L. Teugels.
\newblock \emph{Regular variation}.
\newblock Cambridge University Press, 1987.

\bibitem[Boxma and Cohen(1999)]{Boxma99}
O.J. Boxma and J.W. Cohen.
\newblock Heavy-traffic analysis for the {$GI/G/1$} queue with heavy-tailed
  distributions.
\newblock \emph{Queueing Syst.}, 33:\penalty0 177--204, 1999.

\bibitem[Buldygin and Zaiats(1999)]{Buldygin99}
V.V. Buldygin and V.V. Zaiats.
\newblock A global asymptotic normality of the sample correlogram of a
  stationary gaussian process.
\newblock \emph{Random Oper. Stochastic Equations}, 7:\penalty0 109--132, 1999.

\bibitem[Dieker(2005)]{Dieker05}
A.B. Dieker.
\newblock Conditional limit theorems for queues with {Gaussian} input, a weak
  convergence approach.
\newblock \emph{Stochastic Process. Appl.}, 115:\penalty0 849--873, 2005.

\bibitem[D\polhk{e}bicki and Mandjes(2004)]{Debicki04a}
K.~D\polhk{e}bicki and M.~Mandjes.
\newblock Traffic with an {fBm} limit: Convergence of the stationary workload
  process.
\newblock \emph{Queueing Syst.}, 46:\penalty0 113--127, 2004.

\bibitem[Kingman(1961)]{Kingman61}
J.F.C. Kingman.
\newblock The single server queue in heavy traffic.
\newblock \emph{Proc. Camb. Phil. Soc.}, 57:\penalty0 902--904, 1961.

\bibitem[Mandjes(2007)]{Mandjes07}
M.~Mandjes.
\newblock \emph{Large Deviations for Gaussian Queues}.
\newblock Wiley, Chichester, 2007.

\bibitem[Mannersalo and Norros(2002)]{Mannersalo02}
P.~Mannersalo and I.~Norros.
\newblock A most probable path approach to queueing systems with general
  {Gaussian} input.
\newblock \emph{Comput. Netw.}, 40:\penalty0 399--412, 2002.

\bibitem[Norros(2004)]{Norros94}
I.~Norros.
\newblock A storage model with selfsimilar input.
\newblock \emph{Queueing Syst.}, 16:\penalty0 387--396, 2004.

\bibitem[Prokhorov(1963)]{Prokhorov63}
Ju.V. Prokhorov.
\newblock Transition phenomena in queueing processes. {I}.
\newblock \emph{Litovsk. Mat. Sb.}, 9:\penalty0 199--205, 1963.

\bibitem[Reich(1958)]{Reich58}
E.~Reich.
\newblock On the integrodifferential equation of {Tak\'acs I}.
\newblock \emph{Ann. Math. Stat.}, 29:\penalty0 563�--570, 1958.

\bibitem[Resnick and Samorodnitsky(2000)]{Resnick00}
S.~Resnick and G.~Samorodnitsky.
\newblock A heavy traffic limit theorem for workload processes with heavy
  tailed service requirements.
\newblock \emph{Management Science}, 46:\penalty0 1236--1248, 2000.

\bibitem[Robert(2003)]{Rob03}
P.~Robert.
\newblock \emph{Stochastic Networks and Queues}.
\newblock Springer, Berlin, Germany, 2003.

\bibitem[Szczotka and Woyczy\'nski(2004)]{Szczotka04}
T.~Szczotka and W.A. Woyczy\'nski.
\newblock Heavy-tailed dependent queues in heavy traffic.
\newblock \emph{Probab. Math. Stat.}, 24:\penalty0 67--96, 2004.

\bibitem[Taqqu et~al.(1997)Taqqu, Willinger, and Sherman]{Taqqu97}
M.S. Taqqu, W.~Willinger, and R.~Sherman.
\newblock Proof of a fundamental result in self-similar traffic modeling.
\newblock \emph{Comput. Commun. Rev.}, 27:\penalty0 5--23, 1997.

\bibitem[Whitt(1971)]{Whitt71}
W.~Whitt.
\newblock Weak convergence theorems for priority queues.
\newblock \emph{J. Appl. Probab.}, 8:\penalty0 74--94, 1971.

\end{thebibliography}
\end{document}